\documentclass[12pt]{amsart}
\usepackage{ifthen,verbatim}
\usepackage{graphicx}
\usepackage{mathrsfs}
\usepackage{amsmath,amssymb,amsthm}
\usepackage{tikz-cd}
\usepackage{color}
\usepackage{caption}
\usepackage{subcaption}
\numberwithin{equation}{section}
\setlength{\textwidth}{16cm}
\setlength{\oddsidemargin}{0cm}
\nonstopmode
\setlength{\evensidemargin}{0cm}
\setlength{\footskip}{40pt}

\usepackage{amsfonts}
\usepackage{amssymb}
\usepackage[english]{babel}
\usepackage[utf8x]{inputenc}
\usepackage[autostyle]{csquotes}

\usepackage{float}
\usepackage{tabularx}
\newcolumntype{C}{>{\centering\arraybackslash}X}

\usepackage{tikz-cd}
\usepackage{color}
\usepackage{caption}

\newtheorem{theorem}{Theorem}
\newtheorem{lemma}{Lemma}

\usepackage{url}
\usepackage{graphicx}
\usepackage{multirow}
\usepackage{tabularx}
\usepackage{longtable}
\usepackage{enumerate}

\usepackage{tikz-cd}

\usepackage{bm}
\usepackage{amsmath,mathtools}
\usepackage{amsfonts}



\newcommand{\RomanNumeralCaps}[1]
    {\MakeUppercase{\romannumeral #1}}

\renewcommand{\Im}{{\operatorname{Im}\,}}
\usepackage[skip=1pt,font=scriptsize]{caption}

\usepackage{dirtytalk}

\usepackage{mathtools}


\begin{document}
\title
{Automorphic Forms and Holomorphic Functions on the Upper Half-plane}

\makeatletter\def\thefootnote{\@arabic\c@footnote}\makeatother

\author[Md. S. Alam]{Md. Shafiul Alam}
\address{
Department of Mathematics, University of Barishal, Barishal-8254, Bangladesh}
\email{msalam@bu.ac.bd, shafiulmt@gmail.com}

\keywords{Automorphic form, Holomorphic function, Fuchsian group, Hauptmodul}
\subjclass[2020]{30F35; 11F12; 30C15.}
\begin{abstract}
We define a set of holomorphic functions in terms of the Hauptmodul of a quotient Riemann surface and prove that these functions are holomorphic on the upper half-plane. It is also shown that these functions are automorphic forms of weight $k$ with respect to a Fuchsian group.
\end{abstract}

\maketitle

\section{Introduction}

The group $\operatorname{SL}(2,\mathbb{R})$ is defined by
$$ 
\operatorname{SL}(2,\mathbb{R})=\Bigg\{\displaystyle\begin{pmatrix} a & b\\ c & d \end{pmatrix}: a,b,c,d\in\mathbb{R},~ad-bc=1\Bigg\}
$$
and the group $\operatorname{PSL}(2,\mathbb{R})=\operatorname{SL}(2, \mathbb{R})/\{\pm I_2\}$, where $I_2$ is the $2\times2$ identity matrix (see \cite[Chapter \RomanNumeralCaps{7}]{serre}). Let $\mathbb{H}$ denote the upper half-plane $\{\tau\in\mathbb{C}: \Im \tau>0\}$. The boundary of $\mathbb{H}$ is $\mathbb{R}\cup\infty$. The group $\operatorname{PSL}(2,\mathbb{R})$ acts on $\mathbb{H}$ as follows:
   \[\tau\mapsto \gamma\cdot\tau= \frac{a\tau+b}{c\tau+d},\, \text{for}\,\gamma=\begin{pmatrix}
   a & b\\ c & d
   \end{pmatrix}\in \operatorname{PSL}(2,\mathbb{R}), \, \tau\in\mathbb{H}.\]
All transformations of $\operatorname{PSL}(2,\mathbb{R})$ are conformal. \vspace{2mm}

A Fuchsian group is a discrete subgroup of $\operatorname{PSL}(2,\mathbb{R})$, i.e., it is a group of orientation-preserving isometries of $\mathbb{H}$. Study of Fuchsian group is a very interesting topic in many fields of Mathematics. Many mathematicians studied Fuchsian group and various subgroups of Fuchsian group, for example, see \cite{gabai}, \cite{patterson}, \cite{singer} and \cite{singer2}. The Hecke group which is a subgroup of Fuchsian group is studied in \cite{alam2} and \cite{alam1} to investigate Ramanujan's modular equations. \vspace{2mm}

Let $\gamma=\begin{pmatrix} a & b\\ c & d \end{pmatrix}\in \operatorname{PSL}(2, \mathbb{R})$ and let $\operatorname{tr}(\gamma)$ denote the trace of $\gamma$, then the element $\gamma$ is said to be elliptic, parabolic and hyperbolic when $|\operatorname{tr}(\gamma)|<2,\, |\operatorname{tr}(\gamma)|=2$ and $|\operatorname{tr}(\gamma)|>2$, respectively. If $\Gamma\subset \operatorname{PSL}(2,\mathbb{R})$ is a Fuchsian group and $\gamma\in \Gamma$ is an elliptic element, then a point $\tau\in \mathbb{H}$ is called an elliptic point of $\Gamma$ if $\gamma(\tau)=\tau$. Also, for a parabolic element $\sigma\in\Gamma$, a point $x\in\mathbb{R}\cup\{\infty\}$ is called a cusp of $\Gamma$ if $\sigma(x)=x$. If a Fuchsian group $\Gamma$ acts on $\mathbb{H}$ properly discontinuously, then we have the quotient Riemann surface $\Gamma\backslash\mathbb{H}$.  For a detailed discussion, see \cite{beardon} and \cite{Katok:fg}. \vspace{2mm}

Let $\mathbb{H}^*$ denote the union of the upper half-plane $\mathbb{H}$ and the set of cusps of a Fuchsian group $\Gamma$. Suppose $\begin{pmatrix}
    a & b\\
    c  & d
\end{pmatrix}\in \Gamma$, $\tau\in \mathbb{H}$ and $f:\mathbb{H}\rightarrow{\mathbb{C}}$ is a holomorphic function. Then the function $f$ is called an automorphic form of weight $k$ with respect to $\Gamma$ if
\begin{equation*}
    f\bigg(\frac{a\tau+b}{c\tau+d}\bigg)=(c\tau+d)^k f(\tau).
\end{equation*}
If $k=0$, then  
\begin{equation*}
    f\bigg(\frac{a\tau+b}{c\tau+d}\bigg)=f(\tau)
\end{equation*}
and $f$ is called an automorphic function. When the genus of the quotient Riemann surface $\Gamma\backslash\mathbb{H}^*$ is zero, an automorphic function is called a Hauptmodul. If an automorphic function has no poles, then it is constant according to the consequence of maximum modulus principle. For details, we refer the reader to \cite{bump}, \cite{diamond}, \cite{iwa}, and \cite{miyake}. \vspace{2mm}

Let $F$ be the fundamental domain for the Fuchsian group $\Gamma$. Let $X$ and $\hat X$ denote the quotient Riemann surfaces $\Gamma\backslash\mathbb{H}$ and $\Gamma\backslash\mathbb{H}^*$, respectively. If $F$ is compact, then it has finitely many vertices which are elliptic points and cusps of  $\hat X=\Gamma\backslash\mathbb{H}^*$. Let $P_1, \ldots, P_r$ be the vertices whose orders are $n_1, n_2,\ldots,n_r$, respectively. If the number of elliptic elements and cusps of $\Gamma$ are $m$ and $l$, respectively, then $m+l=r$. If $g$ is the genus of $\hat X$, then we say that $\Gamma$ has signature $(g;n_1,\ldots,n_n)$. For more detailed discussion, reader may consult Section 2.1 of \cite{bers}, Chapter 4 of \cite{Katok:fg}, and Section 2 of \cite{singer}. Let us denote by $A_k$ the space of automorphic forms of weight $k$ with respect to $\Gamma$. The basis for $A_k$ on a Shimura curve $X$ with genus $0$ is determined in Theorem 4 of \cite{Yifan}. The following theorem is written according to Theorem 2.23 of \cite{Shimura} to determine the dimension of $A_k$.

\begin{theorem}[$\text{\cite[Theorem~2.23]{Shimura}}$]\label{th:shimura2.23}

For a Fuchsian group $\Gamma$ with signature $(g;n_1,\ldots,n_r)$, let $g$ be the genus of the compact quotient Riemann surface $\hat X=\Gamma\backslash\mathbb{H}^*$. Then, the dimension, $\operatorname{dim} A_k$, of $A_k$ for an even integer $k$ is given by
\begin{align*}
    \operatorname{dim} A_k=
    \begin{cases}
      0 &  \text{if $k<0$,}\\
      1 & \text{if $k=0$,}\\
      g & \text{if $k=2$,}\\
      (g-1)(k-1)+\displaystyle\sum_{i=1}^r\bigg\lfloor \frac{k}{2}\big(1-\frac{1}{n_i}\big)\bigg\rfloor & \text{if $k\geq4$}.
    \end{cases}
\end{align*}
\end{theorem}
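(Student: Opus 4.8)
The plan is to recast the analytic problem of counting weight-$k$ forms as a question about a Riemann--Roch space on the compact quotient $\hat{X}=\Gamma\backslash\mathbb{H}^*$ and then to invoke the Riemann--Roch theorem. The guiding principle is that, although an automorphic form lives on $\mathbb{H}$, once one absorbs the factor of automorphy $(c\tau+d)^k$ and keeps track of the behaviour at the elliptic points and cusps, the form descends to a meromorphic section of a line bundle on $\hat{X}$. Concretely, a weight-$k$ form $f$ makes $f\,(d\tau)^{k/2}$ a $\Gamma$-invariant object, because $d(\gamma\tau)=(c\tau+d)^{-2}\,d\tau$; this is the bridge between the transformation law in the statement and the geometry of $\hat{X}$.

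First I would fix once and for all a single nonzero meromorphic automorphic form $f_0$ of weight $k$; for even $k$ such a form exists, for instance by pulling back a nonzero meromorphic $1$-form on $\hat{X}$ to a weight-$2$ form on $\mathbb{H}$ and raising it to the $(k/2)$-th power. Since dividing by $f_0$ cancels the factor of automorphy, the map $f\mapsto f/f_0$ carries $A_k$ into the field $\mathbb{C}(\hat{X})$ of meromorphic functions on $\hat{X}$, and holomorphy of $f$ on $\mathbb{H}^*$ turns into a pole bound on the quotient. This identifies $A_k$ with the Riemann--Roch space
\[ L(D_k)=\{\,\phi\in\mathbb{C}(\hat{X}) : \operatorname{div}(\phi)+D_k\ge 0\,\} \]
for the divisor $D_k=\operatorname{div}(f_0)$ read off on $\hat{X}$. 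This step is purely formal; the arithmetic content is hidden in what $D_k$ actually is.

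The decisive computation is therefore that of $D_k$, carried out by a local analysis at the three kinds of points of $\hat{X}$. At an ordinary point the quotient map $\mathbb{H}\to\hat{X}$ is unramified, and the $k/2$ factors of $d\tau$ contribute the canonical bundle $K^{\otimes k/2}$, of degree $(k/2)(2g-2)=k(g-1)$. At an elliptic point of order $n_i$ the map is locally $z\mapsto z^{n_i}$, and pushing $d\tau$ through this ramification shows that a form holomorphic upstairs may acquire a pole of order at most $\big\lfloor\tfrac{k}{2}(1-\tfrac{1}{n_i})\big\rfloor$ downstairs; this is exactly the floor term in the statement. At a cusp the same analysis in the local parameter $q=e^{2\pi i\tau/h}$ is the limiting case $n_i=\infty$ and contributes $\lfloor k/2\rfloor=k/2$. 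Assembling these local contributions yields
\[ \deg D_k = k(g-1)+\sum_{i=1}^r\Big\lfloor \tfrac{k}{2}\big(1-\tfrac{1}{n_i}\big)\Big\rfloor, \]
together with the refinements $D_0=0$ and $D_2\sim K$.

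It then remains to feed this into Riemann--Roch, $\dim L(D_k)-\dim L(K-D_k)=\deg D_k-g+1$, and to separate cases. For $k<0$ the estimate $\deg D_k\le \tfrac{k}{2}\big(2g-2+\sum_i(1-1/n_i)\big)<0$ (the parenthesis being $\operatorname{Area}(F)/2\pi>0$) forces $L(D_k)=0$, so $\dim A_k=0$. For $k=0$ one has $D_0=0$ and $\dim L(0)=1$. For $k=2$ one has $D_2\sim K$, the borderline case $\deg(K-D_k)=0$ in which the correction term $\dim L(K-D_2)=\dim L(0)=1$ survives, giving $\dim L(K)=(\deg K-g+1)+\dim L(0)=(g-1)+1=g$; this is precisely why weight $2$ must be listed separately. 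For $k\ge4$ the positivity
\[ \deg D_k-(2g-2)\;\ge\;\Big(\tfrac{k}{2}-1\Big)\Big(2g-2+\sum_{i=1}^r\big(1-\tfrac{1}{n_i}\big)\Big)\;=\;\Big(\tfrac{k}{2}-1\Big)\frac{\operatorname{Area}(F)}{2\pi}\;>\;0 \]
gives $\deg(K-D_k)<0$, so $L(K-D_k)=0$ and Riemann--Roch collapses to $\dim A_k=\deg D_k-g+1=(g-1)(k-1)+\sum_{i=1}^r\big\lfloor\tfrac{k}{2}(1-\tfrac{1}{n_i})\big\rfloor$. I expect the main obstacle to be the local analysis producing the floor terms: faithfully converting holomorphy of $f$ on $\mathbb{H}$ into integral pole bounds on $\hat{X}$ across the ramification at elliptic points and the exponential chart at the cusps is where the arithmetic of $\lfloor\cdot\rfloor$ genuinely enters, everything else being either formal or the routine degree estimate displayed above.
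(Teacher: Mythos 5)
The paper does not prove this statement at all: it is quoted verbatim (with the cusps folded into the $n_i$ as the case $n_i=\infty$) from Shimura's Theorem 2.23, and the citation is the paper's entire ``proof.'' Your proposal reconstructs what is essentially Shimura's own argument: identify $f\in A_k$ with the $\Gamma$-invariant object $f\,(d\tau)^{k/2}$, descend to a divisor $D_k$ on $\hat X$ whose degree is $k(g-1)+\sum_i\lfloor\frac{k}{2}(1-\frac{1}{n_i})\rfloor$ via the local analysis at ordinary points, elliptic points, and cusps, and finish with Riemann--Roch. The degree bookkeeping is right, and the two inequalities you need --- $\lfloor\frac{k}{2}(1-\frac{1}{n_i})\rfloor\ge(\frac{k}{2}-1)(1-\frac{1}{n_i})$ for even $k\ge 4$ (which uses evenness of $k$: the fractional part of $\frac{k}{2}(1-\frac1{n_i})$ is either $0$ or at most $1-\frac{1}{n_i}$) and $\deg D_k<0$ for $k<0$ --- both hold, so the cases $k<0$, $k=0$, $k\ge4$ go through as you describe.

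The one place your write-up is internally inconsistent is $k=2$. Your own local computation at a cusp gives the contribution $\lfloor k/2\rfloor=1$ when $k=2$, so $D_2=K+\sum_{j}Q_j$ over the $u$ cusps, not $D_2\sim K$; Riemann--Roch then yields $\dim L(D_2)=g+u-1$ when $u\ge1$ (the correction term $L(K-D_2)=L(-\sum_j Q_j)$ vanishes), and $g$ only when $u=0$. This is in fact how Shimura states the $k=2$ case, split according to whether cusps exist; the paper's restatement suppresses that distinction. So either restrict the clean statement $\dim A_2=g$ to groups without cusps (or to cusp forms), or record the $g+u-1$ formula --- as written, ``$D_2\sim K$'' contradicts the cusp analysis you correctly carried out two paragraphs earlier. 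With that repair, the argument is complete and is the standard proof of the cited theorem.
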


In the following section, we present our main results and their proofs.

\section{Main Results}

Let $(0;n_1,\ldots,n_r)$ be the signature of a Fuchsian group $\Gamma$, i.e., the genus of the quotient Riemann surface $\hat X=\Gamma\backslash\mathbb{H}^*$ is $0$ and let $d=\operatorname{dim} A_k$. Then, for an even integer $k\geq4$, we have from Theorem 1 $$d=1-k+\displaystyle\sum_{i=1}^r\bigg\lfloor \frac{k}{2}\big(1-\frac{1}{n_i}\big)\bigg\rfloor.$$ 

In the following theorem, we define the functions $h_j$ for $j=0,\ldots, d-1$ so that the functions are holomorphic on $\mathbb{H}$. Also, these functions are automorphic forms of weight $k$ with respect to $\Gamma$.

\begin{theorem}\label{th:yifanth4}
Consider the Fuchsian group $\Gamma$ with signature $(0; n_1, \ldots, n_r)$ and the compact quotient Riemann surface $\hat X=\Gamma\backslash\mathbb{H}^*$. Let $\tau_1,\ldots,\tau_r$ be the inequivalent vertices (elliptic points or cusps of $\hat X$) of the fundamental domain of $\Gamma$ of orders $n_1, \ldots, n_r$, respectively, and let $w(\tau)$ be a Hauptmodul of $\hat X$. For an even integer $k\geq4$, let 
\begin{align*}
a_i=\Big\lfloor \frac{k}{2}\Big(1-\frac{1}{n_i}\Big)\Big\rfloor \quad\text{and}\quad
    d=\operatorname{dim} A_k= 1-k+\sum_{i=1}^r a_i.
\end{align*}
If $w(\tau_i)=w_i$ for $i=1,\ldots,r$ and the functions  
\begin{equation}\label{g1}
    h_j(\tau)=\frac{\big(w'(\tau)\big)^{k/2} \big(w(\tau)\big)^j}{\displaystyle\prod_{i=1,w_i\neq\infty}^r \big(w(\tau)-w_i\big)^{a_i}}
\end{equation}
for $j=0,\ldots, d-1$ and $\tau\in \mathbb{H}$, then $h_j(\tau)$ is holomorphic on $\mathbb{H}$.
\end{theorem}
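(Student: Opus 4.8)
The plan is to regard $h_j$ as an a priori meromorphic function on $\mathbb{H}$ and to prove it has no poles there, by a local order count at each candidate singularity. I would first record the structural input: since $\hat X$ has genus $0$ and $w$ is a Hauptmodul, $w$ descends through the quotient map $\pi\colon\mathbb{H}^*\to\hat X$ to a biholomorphism $\hat X\cong\mathbb{P}^1(\mathbb{C})$; in particular $w$ is $\Gamma$-invariant, attains every value exactly once on $\hat X$, and is holomorphic on $\mathbb{H}$ away from the single $\Gamma$-orbit lying over the unique point $w^{-1}(\infty)$. Hence $w'$ is meromorphic on $\mathbb{H}$, and since $k$ is even, $k/2\in\mathbb{Z}$ and $(w'(\tau))^{k/2}$ is a genuine single-valued meromorphic function, so $h_j$ in \eqref{g1} is meromorphic on $\mathbb{H}$ and it suffices to show $\operatorname{ord}_\tau h_j\ge 0$ for all $\tau\in\mathbb{H}$. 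By injectivity of $w$, the denominator vanishes at $\tau\in\mathbb{H}$ only when $\tau$ lies in the $\Gamma$-orbit of some $\tau_i$ with $w_i\neq\infty$, and such an orbit meets $\mathbb{H}$ only when $\tau_i$ is an elliptic point; the numerator is singular only at the pole of $w$. Cusps lie on $\partial\mathbb{H}$ and are irrelevant to holomorphy on $\mathbb{H}$. Since $w$ is $\Gamma$-invariant and the chain rule gives $w'(\gamma\tau)=(c\tau+d)^2 w'(\tau)$ with $(c\tau+d)$ nonvanishing on $\mathbb{H}$, the order of $h_j$ is constant along $\Gamma$-orbits, so it is enough to check one representative of each. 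This leaves two families of candidate poles.

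For the elliptic points, near $\tau_i$ of order $n_i$ with $w_i\neq\infty$ I would pass to a local coordinate (e.g.\ $t=(\tau-\tau_i)/(\tau-\bar\tau_i)$) in which the stabilizer acts as multiplication by $e^{2\pi i/n_i}$. Then $\Gamma$-invariance forces $w-w_i$ to be a power series in $t^{n_i}$, and since $w-w_i$ has a simple zero downstairs this yields $\operatorname{ord}_{\tau_i}(w-w_i)=n_i$ and hence $\operatorname{ord}_{\tau_i}(w')=n_i-1$. Counting orders in \eqref{g1} gives
\begin{equation*}
\operatorname{ord}_{\tau_i} h_j\;\ge\;\frac{k}{2}(n_i-1)-a_i n_i,
\end{equation*}
which is non-negative precisely because $a_i=\big\lfloor\frac{k}{2}(1-\frac{1}{n_i})\big\rfloor\le\frac{k}{2}(1-\frac{1}{n_i})$; the factor $w^j$ contributes only extra (non-negative) order, arising when $w_i=0$.

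For the pole of $w$ — the only place the numerator blows up — I would use the range $0\le j\le d-1$. If $w^{-1}(\infty)$ is the image of a non-special point, then $w$ has a simple pole at some $\tau_\infty\in\mathbb{H}$, every $w_i$ is finite, and a direct count gives $\operatorname{ord}_{\tau_\infty} h_j=\sum_{i=1}^r a_i-k-j\ge 0$ since $j\le d-1=\sum_i a_i-k$. If instead the pole sits at an elliptic point $\tau_{i_\infty}$ of order $m$ (so $w_{i_\infty}=\infty$ and its factor is absent from the denominator), the analogous count gives, in the decisive case $j=d-1$,
\begin{equation*}
\operatorname{ord}_{\tau_{i_\infty}} h_j\;=\;m\Big(\tfrac{k}{2}-a_{i_\infty}\Big)-\tfrac{k}{2}\;=\;m\Big\lceil\tfrac{k}{2m}\Big\rceil-\tfrac{k}{2}\;\ge\;0,
\end{equation*}
where $\tfrac{k}{2}-a_{i_\infty}=\lceil\tfrac{k}{2m}\rceil$ (valid since $\tfrac{k}{2}\in\mathbb{Z}$) and $m\lceil\tfrac{k}{2m}\rceil\ge\tfrac{k}{2}$. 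Because the order is decreasing in $j$, the bound at $j=d-1$ settles all admissible $j$. Combining the three cases shows $\operatorname{ord}_\tau h_j\ge 0$ everywhere on $\mathbb{H}$, i.e.\ $h_j$ is holomorphic on $\mathbb{H}$.

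I expect the main obstacle to be the bookkeeping at the pole of $w$: one must balance the pole produced by $(w')^{k/2}w^j$ against the compensating zeros of the $(w-w_i)^{a_i}$ factors, and it is exactly the defining value $d=1-k+\sum_i a_i$, hence the restriction $j\le d-1$, together with the floor/ceiling identity, that forces every order to be non-negative. A secondary technical point, used throughout, is the justification of the local vanishing orders of $w-w_i$ and $w'$ at the elliptic points via the branched-covering structure of $\pi$.
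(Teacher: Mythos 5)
Your proposal is correct and follows essentially the same route as the paper's proof: a local order count at each candidate singularity, with the same three situations (vertices where $w$ is finite, a pole of $w$ at a vertex of order $m=n_1$, and a simple pole of $w$ at a non-vertex point) and the same decisive inequalities $\frac{k}{2}(n_i-1)\ge n_i a_i$ and $m a_{i_\infty}\le \frac{k}{2}(m-1)$. Your write-up is somewhat more careful than the paper's on the supporting points --- the injectivity of the Hauptmodul locating all zeros of the denominator, the $\Gamma$-invariance of the order along orbits, the correct transformation $w'(\gamma\tau)=(c\tau+d)^2w'(\tau)$, and the ceiling identity closing the last inequality --- but these are refinements of the same argument rather than a different one.
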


\begin{proof}
We need to consider the following three cases:\vspace{2mm}
\begin{enumerate}[(i)]
    \item the Hauptmodul $w(\tau)$ does not have any pole at the points $\tau_i$ for $i=1,\ldots,r$;\vspace{2mm}
    \item the Hauptmodul $w(\tau)$ has a pole at one of the points $\tau_i$ for $i=1,\ldots,r$; \vspace{2mm}
    \item the Hauptmodul $w(\tau)$ has a pole at another point, say $\tau=\tau_0$, except the points $\tau_1,\ldots,\tau_r$.
\end{enumerate}\vspace{2mm}
If a function has a zero of order $\geq 0$ and has a pole of order $\leq 0$ at a point, then there is no principal part in the expansion of the function at that point, i.e., the function is holomorphic. Thus, we have to show that $h_j$ has a zero of order $\geq0$ at $\tau=\tau_i$ for Case (i), $h_j$ has a pole of order $\leq0$ at $\tau=\tau_i$ for Case (ii) and $h_j$ has a pole of order $\leq0$ at $\tau=\tau_0$ for Case (iii). \vspace{2mm}  

Case (i): If $w(\tau)$ does not have any pole at $\tau_i$, then $w(\tau_i)=w_i\neq \infty$ for $i=1,\ldots,r$. Since $\tau_i$ is a vertex of order $n_i$, in a neighbourhood of $\tau=\tau_i$, we have
\begin{align}
    w(\tau)-w(\tau_i)=b_i (\tau - \tau_i)^{n_i}+O\big((\tau - \tau_i)^{n_i+1}\big)
   \end{align}
   or,
   \begin{align}
    w(\tau)-w_i= (\tau - \tau_i)^{n_i} w^*(\tau),
   \end{align}
where $b_i\in \mathbb{C}\setminus \{0\}$, $w^*(\tau)$ is analytic in a neighbourhood of $\tau=\tau_i$ and $w^*(\tau_i)\neq 0$ for $i=1,\ldots,r$. Therefore, in a neighbourhood of $\tau=\tau_i$, one can define a single-valued analytic $n_i$-th root of $(w-w_i)$ and this can be done at all points which are equivalent to $\tau_i$ under the action of the Fuchsian group $\Gamma$. Since $w(\tau)-w_i\neq 0$ for $\tau\neq\tau_i$ and $(w-w_i)$ is analytic on the other part of $\mathbb{H}$, its $n_i$-th root is analytic at each point of the remainder of $\mathbb{H}$. As $(w(\tau)-w_i)^{n_i}$ is locally analytic and single-valued at each $\tau\in\mathbb{H}$, so it follows from monodromy theorem that a single-valued and analytic $n_i$-th root of $(w-w_i)$ can be defined on the whole $\mathbb{H}$. 

From (2.3), we observe that $\big(w(\tau)-w_i\big)$ has a zero of order $n_i$ at $\tau=\tau_i$ and $\displaystyle \prod_{i=1,w_i\neq\infty}^r \big(w(\tau)-w_i\big)^{a_i}$ has a zero of order $n_ia_i=\displaystyle n_i\Big\lfloor\frac{k}{2}\Big(1-\frac{1}{n_i}\Big)\Big\rfloor$ at $\tau=\tau_i$. Also, we have from (2.2)
\begin{align}
    w'(\tau)=b_i n_i (\tau - \tau_i)^{n_i-1}+O\big((\tau - \tau_i)^{n_i}\big).
   \end{align}
Consequently, at $\tau=\tau_i$, $\big(w'(\tau)\big)^{k/2}$ has a zero of order $\displaystyle \frac{k}{2}(n_i-1)$. Since $$\displaystyle \frac{k}{2}(n_i-1)-\displaystyle n_i\Big\lfloor\frac{k}{2}\Big(1-\frac{1}{n_i}\Big)\Big\rfloor\geq 0,$$ we conclude from (\ref{g1}) that $h_j$ has a zero of order $\geq0$ at $\tau=\tau_i$. Hence $h_j$ is holomorphic on $\mathbb{H}$.\vspace{2mm}

Case (ii): Assume that $w(\tau)$ has a pole at one of the points $\tau_i$ for $i=1,\ldots,r$. Without loss of generality, suppose $w(\tau)$ has a pole at $\tau_1$, i.e., $w(\tau_1)=w_1=\infty$. Since $\tau_1$ is a vertex of order $n_1$, it follows that
\begin{align*}
    w(\tau)=\frac{b_1}{(\tau - \tau_1)^{n_1}}+O\big((\tau - \tau_1)^{1-n_1}\big),\quad b_1\in \mathbb{C}\setminus \{0\}
\end{align*}\\
and
\begin{align*}
    w'(\tau)=-\frac{b_1n_1}{(\tau - \tau_1)^{n_1+1}}+O\big((\tau - \tau_1)^{-n_1}\big).
\end{align*}
In this case, from (\ref{g1}) we have
\begin{equation}\label{g2}
    h_j(\tau)=\frac{\big(w'(\tau)\big)^{k/2} \big(w(\tau)\big)^j}{\displaystyle\prod_{i=2,w_i\neq\infty}^r \big(w(\tau)-w_i\big)^{a_i}}.
\end{equation}
Now, suppose that $h_j(\tau)$ defined in (\ref{g2}) has a pole of order $N$ at $\tau=\tau_1$. Since $w(\tau)$ has a pole of order $n_1$ at $\tau=\tau_1$, $\big(w'(\tau)\big)^{k/2}$ has a pole of order $\displaystyle\frac{k}{2}(n_1+1)$ and $\displaystyle\prod_{i=2,w_i\neq\infty}^r \big(w(\tau)-w_i\big)^{a_i}$ has a pole of order $\displaystyle n_1\sum_{i=2}^ra_i$ at $\tau=\tau_1$. As $j$ varies from $0$ to $d-1$, so the maximum value of $j$ is $d-1=\displaystyle\sum_{i=1}^r a_i-k$. Hence $\big(w(\tau)\big)^j$ has a pole of order at most $n_1\Big(\displaystyle\sum_{i=1}^r a_i-k\Big)$ at $\tau=\tau_1$. Therefore, we have
\begin{align*}
    N&\leq \frac{k}{2}(n_1+1)+n_1\Big(\displaystyle\sum_{i=1}^r a_i-k\Big)-n_1\sum_{i=2}^ra_i\\ &=\frac{k}{2}(n_1+1)+n_1\Big(\sum_{i=1}^r\Big\lfloor \frac{k}{2}\Big(1-\frac{1}{n_i}\Big)\Big\rfloor-k\Big)-n_1\sum_{i=2}^r{\Big\lfloor \frac{k}{2}\Big(1-\frac{1}{n_i}\Big)\Big\rfloor}\\
    &=-\frac{k}{2}\Big(n_1-1\Big)+n_1\Big\lfloor \frac{k}{2}\Big(1-\frac{1}{n_1}\Big)\Big\rfloor \leq 0.
\end{align*}
Since $N\leq0$, it follows that there is no principal part in the expansion of $h_j$, i.e., $h_j$ is holomorphic on $\mathbb{H}$.\vspace{2mm}

Case (iii): Suppose that $w(\tau)$ has the value $\infty$ at the point $\tau=\tau_0$ and $w(\tau_i)\neq \infty$ for $i=1,\ldots, r$. Therefore, $w(\tau)$ has a simple pole at $\tau_0$ and we have
\begin{align}\label{c3w}
    w(\tau)=\frac{b_0}{(\tau - \tau_0)}+O(1),\quad b_0\in \mathbb{C}\setminus \{0\}
\end{align} and
\begin{align}\label{c3w'}
    w'(\tau)=-\frac{b_0}{(\tau - \tau_0)^{2}}+O(1).
\end{align}
Let $N_0$ be the order of the pole of $h_j$ defined in (\ref{g1}) at $\tau=\tau_0$. From (\ref{c3w}) and (\ref{c3w'}), we observe that $\big(w'(\tau)\big)^{k/2}$ has a pole of order $k$, $\displaystyle\prod_{i=1,w_i\neq\infty}^r \big(w(\tau)-w_i\big)^{a_i}$ has a pole of order $\displaystyle \sum_{i=1}^ra_i$ and $w^j$ has a pole of order at most $d-1=\displaystyle \sum_{i=1}^ra_i-k$. Therefore, from (\ref{g1}), it follows that
\begin{align*}
    N_0\leq k+\sum_{i=1}^ra_i-k- \sum_{i=1}^ra_i=0,
\end{align*}
which implies that $h_j$ is holomorphic on $\mathbb{H}$ in this case also. 
\end{proof}

\begin{lemma}
The functions $h_j$ for $j=0,\ldots, d-1$ defined in (\ref{g1}) is an automorphic form of weight $k$ with respect to the Fuchsian group $\Gamma$.
\end{lemma}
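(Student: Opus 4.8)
The plan is to verify the defining transformation law of an automorphic form of weight $k$ directly, relying on the fact that the Hauptmodul $w$ is itself automorphic of weight $0$, together with the holomorphicity on $\mathbb{H}$ already established in Theorem \ref{th:yifanth4}. By the definition given in the Introduction, each $h_j$ is holomorphic on $\mathbb{H}$ (that is precisely the content of Theorem \ref{th:yifanth4}), so it suffices to show that for every $\gamma=\begin{pmatrix} a & b\\ c & d\end{pmatrix}\in\Gamma$ one has $h_j(\gamma\cdot\tau)=(c\tau+d)^k\,h_j(\tau)$.

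First I would record the transformation behaviour of $w$ and of $w'$ under $\gamma$. Since $w$ is a Hauptmodul of $\hat X=\Gamma\backslash\mathbb{H}^*$, it is an automorphic function, so $w(\gamma\cdot\tau)=w(\tau)$ for all $\gamma\in\Gamma$. Differentiating this identity in $\tau$ by the chain rule, and using that $\dfrac{d}{d\tau}\Big(\dfrac{a\tau+b}{c\tau+d}\Big)=\dfrac{ad-bc}{(c\tau+d)^2}=\dfrac{1}{(c\tau+d)^2}$ because $ad-bc=1$, I obtain $w'(\gamma\cdot\tau)\,(c\tau+d)^{-2}=w'(\tau)$, i.e. $w'(\gamma\cdot\tau)=(c\tau+d)^2\,w'(\tau)$. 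Thus $w'$ transforms as a (meromorphic) automorphic form of weight $2$, which is the key identity driving the whole computation.

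Next I would substitute these two facts into the definition (\ref{g1}) of $h_j$, evaluated at $\gamma\cdot\tau$. The numerator satisfies $\big(w'(\gamma\cdot\tau)\big)^{k/2}=\big((c\tau+d)^2 w'(\tau)\big)^{k/2}=(c\tau+d)^{k}\big(w'(\tau)\big)^{k/2}$, which produces exactly the automorphy factor $(c\tau+d)^k$; here I use that $k$ is an even integer, so that $k/2\in\mathbb{Z}$ and no branch of a fractional power need be chosen. The factor $\big(w(\gamma\cdot\tau)\big)^j$ and each factor $\big(w(\gamma\cdot\tau)-w_i\big)^{a_i}$ in the denominator are invariant under $\gamma$, since $w(\gamma\cdot\tau)=w(\tau)$ and every $a_i$ is an integer. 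Collecting these contributions yields $h_j(\gamma\cdot\tau)=(c\tau+d)^k\,h_j(\tau)$.

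Finally, combining this transformation law with the holomorphicity of $h_j$ on $\mathbb{H}$ from Theorem \ref{th:yifanth4} shows that each $h_j$, $j=0,\dots,d-1$, is an automorphic form of weight $k$ with respect to $\Gamma$. I do not expect a genuine obstacle here: the only point demanding care is the differentiation step producing the factor $(c\tau+d)^{2}$, after which the integrality of $k/2$ and of the exponents $a_i$ removes any ambiguity of powers, and the verification is purely formal once the weight-$2$ transformation of $w'$ is in hand.
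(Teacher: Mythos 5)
Your proof is correct and follows essentially the same route as the paper: invariance of $w$ under $\Gamma$, the chain-rule identity for $w'$, and then raising to the power $k/2$ to produce the factor $(c\tau+d)^k$. In fact your derivation is more careful than the paper's, which misstates the intermediate identity as $w'\big(\tfrac{a\tau+b}{c\tau+d}\big)=(c\tau+d)^{1/2}w'(\tau)$ (the correct exponent is $2$, as you obtain; raising $(c\tau+d)^{1/2}$ to the power $k/2$ would give $(c\tau+d)^{k/4}$, not the $(c\tau+d)^k$ the paper then writes), so your version repairs an evident typo while keeping the argument otherwise identical.
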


\begin{proof}

For $j=0,\ldots, d-1$ and $a_i=\Big\lfloor \frac{k}{2}\Big(1-\frac{1}{n_i}\Big)\Big\rfloor$, we have to show that  $$h_j\Big(\frac{a\tau+b}{c\tau+d}\Big)=(c\tau+d)^k h_j(\tau),$$
where $\begin{pmatrix}
    a & b\\
    c  & d
\end{pmatrix}\in \Gamma$ and $\tau\in \mathbb{H}$. Since $w(\tau)$ is a Hauptmodul of $\hat X$, i.e., $w(\tau)$ is an automorphic function, thus we have $$w\Big(\frac{a\tau+b}{c\tau+d}\Big)=w(\tau)$$ and $$w'\Big(\frac{a\tau+b}{c\tau+d}\Big)=(c\tau+d)^{1/2}w'(\tau).$$
Now,
\begin{align*}
    h_j\Big(\frac{a\tau+b}{c\tau+d}\Big)&=\frac{\displaystyle\bigg(w'\Big(\frac{a\tau+b}{c\tau+d}\Big)\bigg)^{k/2} \bigg(w\Big(\frac{a\tau+b}{c\tau+d}\Big)\bigg)^j}{\displaystyle\prod_{i=1,w_i\neq\infty}^r \bigg(w\Big(\frac{a\tau+b}{c\tau+d}\Big)-w_i\bigg)^{a_i}}\\
    &=\frac{(c\tau+d)^k\big(w'(\tau)\big)^{k/2} \big(w(\tau)\big)^j}{\displaystyle\prod_{i=1,w_i\neq\infty}^r \big(w(\tau)-w_i\big)^{a_i}}\\
    &=(c\tau+d)^k h_j(\tau).
\end{align*}
Thus, $h_j$ is an automorphic form of weight $k$ with respect to $\Gamma$.   
\end{proof}


\def\cprime{$'$} \def\cprime{$'$} \def\cprime{$'$}
\providecommand{\bysame}{\leavevmode\hbox to3em{\hrulefill}\thinspace}
\providecommand{\MR}{\relax\ifhmode\unskip\space\fi MR }

\providecommand{\MRhref}[2]{%
  \href{http://www.ams.org/mathscinet-getitem?mr=#1}{#2}
}
\providecommand{\href}[2]{#2}

\end{document}